\DeclareMathOperator{\Hom}{Hom}
\newcommand{\C}{\mathcal{C}}
\newcommand{\ceq}{\mathrel{\mathop:}=}
\newcommand{\diagram}[3]{\matrix (#1) [matrix of math nodes,row
  sep={#2},column sep={#3},text height=1.5ex,text depth=0.25ex]}
\newcommand{\nang}{\mathscr{N}}
\theoremstyle{definition}
\newtheorem{remark}[subsection]{Remark}
\theoremstyle{plain}
\newtheorem{theorem}[subsection]{Theorem}
\title{The morphism axiom for $n$-angulated categories}
\author{Emilie Arentz-Hansen}
\author{Petter Andreas Bergh}
\author{Marius Thaule}
\address{Department of Mathematical Sciences, NTNU, NO-7491
  Trondheim, Norway}
\date{\today}
\email{emilieba@stud.ntnu.no}
\email{bergh@math.ntnu.no}
\email{mariusth@math.ntnu.no}
\begin{document}

\begin{abstract}
  We show that the morphism axiom for $n$-angulated categories is
  redundant.
\end{abstract}

\subjclass[2010]{18E30}
\keywords{Triangulated categories, $n$-angulated categories,
  morphism axiom}

\maketitle

\section{Introduction}
\label{sec:intro}

In \cite{GKO}, Geiss, Keller and Oppermann introduced $n$-angulated
categories. These are generalizations of triangulated categories, in
the sense that triangles are replaced by $n$-angles, that is, morphism
sequences of length $n$. Thus a $3$-angulated category is precisely a
triangulated category.

There are by now numerous examples of such generalized triangulated
categories. They appear for example as certain cluster tilting
subcategories of triangulated categories. Recently, in \cite{Jasso},
Jasso introduced the notion of \emph{algebraic} $n$-angulated
categories. Such a category is by definition equivalent to a stable
category of a Frobenius $n$-exact category, the latter being a
genralization of an exact category; exact sequences are replaced by
so-called $n$-exact sequences. Algebraic $n$-angulated categories are
therefore natural generalizations of algebraic triangulated
categories, which are defined as categories equivalent to stable
categories of Frobenius exact categories. By \cite[Section
6.5]{Jasso}, the $n$-angulated categories that arise as cluster tilted
subcategories of algebraic triangulated categories are themselves
algebraic. Actually, the only examples so far of $n$-angulated
categories that are not algebraic are the rather exotic ones in
\cite{BerghJassoThaule}. At the time of writing, there is no notion of
\emph{topological} $n$-angulated categories, except in the
triangulated case.

The axioms defining $n$-angulated categories are generalized versions
of the axioms defining triangulated categories. Among these axioms,
the morphism axiom and the generalized octahedral axiom are the ones
that guarantee an interesting theory. Apart from the existence axiom,
which states that every morphism is part of an $n$-angle, the other
axioms are ``bookkeeping axioms,'' to borrow Paul Balmer's
terminology. The generalized octahedral axiom was introduced in
\cite{BerghThaule}, but in this paper we present it in a much more
compact and readable form.

The topic of this paper is the morphism axiom, which states that a
morphism between the bases of two $n$-angles can be extended to a
morphism of $n$-angles. We show that this axiom is redundant; it
follows from the generalized octahedral axiom and the existence
axiom. For triangulated categories, this was proved by May in
\cite{May}.

\section{The axioms for $n$-angulated categories}
\label{sec:axioms}

In this section, we recall the axioms for $n$-angulated categories. We
fix an additive category $\C$ with an automorphism
$\Sigma \colon \C \to \C$, and an integer $n$ greater than or equal to
three.

A sequence of objects and morphisms in $\C$ of the form
\begin{equation*}
  A_1 \xrightarrow{\alpha_1} A_2 \xrightarrow{\alpha_2} \cdots
  \xrightarrow{\alpha_{n - 1}} A_n \xrightarrow{\alpha_n} \Sigma A_1 
\end{equation*}
is called an \emph{$n$-$\Sigma$-sequence}. Whenever convenient, we
shall denote such sequences by $A_\bullet, B_\bullet$ etc. The left
and right \emph{rotations} of $A_{\bullet}$ are the two
$n$-$\Sigma$-sequences
\begin{equation*}
  A_2 \xrightarrow{\alpha_2} A_3 \xrightarrow{\alpha_3} \cdots
  \xrightarrow{\alpha_n} \Sigma A_1 \xrightarrow{(-1)^n
    \Sigma\alpha_1} \Sigma A_2
\end{equation*}
and
\begin{equation*}
  \Sigma^{-1}A_n \xrightarrow{(-1)^n \Sigma^{-1} \alpha_n} A_1
  \xrightarrow{\alpha_1} \cdots \xrightarrow{\alpha_{n - 2}} A_{n-1}
  \xrightarrow{\alpha_{n - 1}} A_n
\end{equation*} 
respectively, and a \emph{trivial} $n$-$\Sigma$-sequence is a sequence
of the form
\begin{equation*}
  A \xrightarrow{1} A \to 0 \to \cdots \to 0 \to \Sigma A
\end{equation*}
or any of its rotations. 

A \emph{morphism} $A_{\bullet} \xrightarrow{\varphi} B_{\bullet}$ of
$n$-$\Sigma$-sequences is a sequence
$\varphi = (\varphi_1,\varphi_2,\ldots,\varphi_n)$ of morphisms in
$\C$ such that the diagram
\begin{center}
  \begin{tikzpicture}
    \diagram{d}{2.5em}{2.5em}{
      A_1 & A_2 & A_3 & \cdots & A_n & \Sigma A_1\\
      B_1 & B_2 & B_3 & \cdots & B_n & \Sigma B_1\\
    };
    
    \path[->,midway,font=\scriptsize]
      (d-1-1) edge node[above] {$\alpha_1$} (d-1-2)
                   edge node[right] {$\varphi_1$} (d-2-1)
      (d-1-2) edge node[above] {$\alpha_2$} (d-1-3)
                   edge node[right] {$\varphi_2$} (d-2-2)
      (d-1-3) edge node[above] {$\alpha_3$} (d-1-4)
                   edge node[right] {$\varphi_3$} (d-2-3)
      (d-1-4) edge node[above] {$\alpha_{n - 1}$} (d-1-5)
      (d-1-5) edge node[above] {$\alpha_n$} (d-1-6)
                   edge node[right] {$\varphi_n$} (d-2-5)
      (d-1-6) edge node[right] {$\Sigma\varphi_1$} (d-2-6)
      (d-2-1) edge node[above] {$\beta_1$} (d-2-2)
      (d-2-2) edge node[above] {$\beta_2$} (d-2-3)
      (d-2-3) edge node[above] {$\beta_3$} (d-2-4)
      (d-2-4) edge node[above] {$\beta_{n - 1}$} (d-2-5)
      (d-2-5) edge node[above] {$\beta_n$} (d-2-6);
  \end{tikzpicture}
\end{center}
commutes.  It is an \emph{isomorphism} if
$\varphi_1,\varphi_2,\ldots,\varphi_n$ are all isomorphisms in $\C$,
and a \emph{weak isomorphism} if $\varphi_i$ and $\varphi_{i + 1}$ are
isomorphisms for some $1 \leq i \leq n$ (with
$\varphi_{n + 1} \ceq \Sigma \varphi_1$).  Note that the composition
of two weak isomorphisms need not be a weak isomorphism.  Also, note
that if two $n$-$\Sigma$-sequences $A_{\bullet}$ and $B_{\bullet}$ are
weakly isomorphic through a weak isomorphism
$A_{\bullet} \xrightarrow{\varphi} B_{\bullet}$, then there does not
necessarily exist a weak isomorphism $B_{\bullet} \to A_{\bullet}$ in
the opposite direction.

\sloppy Let $\nang$ be a collection of $n$-$\Sigma$-sequences in
$\C$. Then the triple $( \C, \Sigma, \nang )$ is an
\emph{$n$-angulated category} if the following four axioms are
satisfied:

\begin{itemize}
\item[{\textbf{(N1)}}]
  \begin{itemize}
  \item[(a)] $\nang$ is closed under direct sums, direct summands and
    isomorphisms of $n$-$\Sigma$-sequences.
  \item[(b)] For all $A \in \C$, the trivial $n$-$\Sigma$-sequence
   \begin{equation*}
     A \xrightarrow{1} A \to 0 \to \cdots \to 0 \to \Sigma A
   \end{equation*}
   belongs to $\nang$.
 \item[(c)] For each morphism $\alpha \colon A_1 \to A_2$ in $\C$,
   there exists an $n$-$\Sigma$-sequence in $\nang$ whose first
   morphism is $\alpha$.
 \end{itemize}
\item[{\textbf{(N2)}}] An $n$-$\Sigma$-sequence belongs to $\nang$ if
  and only if its left rotation belongs to $\nang$.
\item[{\textbf{(N3)}}] Given the solid part of the commutative diagram
  \begin{center}
    \begin{tikzpicture}
      \diagram{d}{2.5em}{2.5em}{
        A_1 & A_2 & A_3 & \cdots & A_n & \Sigma A_1\\
        B_1 & B_2 & B_3 & \cdots & B_n & \Sigma B_1\\
      };
    
      \path[->,midway,font=\scriptsize]
        (d-1-1) edge node[above] {$\alpha_1$} (d-1-2)
                     edge node[right] {$\varphi_1$} (d-2-1)
        (d-1-2) edge node[above] {$\alpha_2$} (d-1-3)
                     edge node[right] {$\varphi_2$} (d-2-2)
        (d-1-3) edge node[above] {$\alpha_3$} (d-1-4)
                     edge[densely dashed] node[right] {$\varphi_3$} (d-2-3)
        (d-1-4) edge node[above] {$\alpha_{n - 1}$} (d-1-5)
        (d-1-5) edge node[above] {$\alpha_n$} (d-1-6)
                     edge[densely dashed] node[right] {$\varphi_n$} (d-2-5)
        (d-1-6) edge node[right] {$\Sigma\varphi_1$} (d-2-6)
        (d-2-1) edge node[above] {$\beta_1$} (d-2-2)
        (d-2-2) edge node[above] {$\beta_2$} (d-2-3)
        (d-2-3) edge node[above] {$\beta_3$} (d-2-4)
        (d-2-4) edge node[above] {$\beta_{n - 1}$} (d-2-5)
        (d-2-5) edge node[above] {$\beta_n$} (d-2-6);
    \end{tikzpicture}
  \end{center}
  with rows in $\nang$, the dotted morphisms exist and give a morphism
  of $n$-$\Sigma$-sequences.
\item[{\textbf{(N4)}}] Given the solid part of the diagram
  \begin{center}
    \begin{tikzpicture}
      \diagram{d}{2.5em}{2.5em}{
        A_1 & A_2 & A_3 & A_4 & \cdots & A_{n - 1} & A_n & \Sigma A_1\\
        A_1 & B_2 & B_3 & B_4 & \cdots & B_{n - 1} & B_n & \Sigma
        A_1\\
        A_2 & B_2 & C_3 & C_4 & \cdots & C_{n - 1} & C_n & \Sigma
        A_2\\
      };

      \path[->,midway,font=\scriptsize]
        (d-1-1) edge node[above] {$\alpha_1$} (d-1-2)
        ([xshift=-0.1em] d-1-1.south) edge[-] ([xshift=-0.1em] d-2-1.north)
        ([xshift=0.1em] d-1-1.south) edge[-] ([xshift=0.1em] d-2-1.north)
        (d-1-2) edge node[above] {$\alpha_2$} (d-1-3)
                     edge node[right] {$\varphi_2$} (d-2-2)
        (d-1-3) edge node[above] {$\alpha_3$} (d-1-4)
                     edge[densely dashed] node[right] {$\varphi_3$} (d-2-3)
        (d-1-4) edge node[above] {$\alpha_4$} (d-1-5)
                     edge[densely dashed] node[right] {$\varphi_4$} (d-2-4)
        (d-1-5) edge node[above] {$\alpha_{n - 2}$} (d-1-6)
        (d-1-6) edge node[above] {$\alpha_{n - 1}$} (d-1-7)
                     edge[densely dashed] node[right] {$\varphi_{n - 1}$} (d-2-6)
        (d-1-7) edge node[above] {$\alpha_n$} (d-1-8)
                     edge[densely dashed] node[right] {$\varphi_n$} (d-2-7)
        ([xshift=-0.1em] d-1-8.south) edge[-] ([xshift=-0.1em] d-2-8.north)
        ([xshift=0.1em] d-1-8.south) edge[-] ([xshift=0.1em] d-2-8.north)
        (d-2-1) edge node[above] {$\beta_1$} (d-2-2)
                     edge node[right] {$\alpha_1$} (d-3-1)
        (d-2-2) edge node[above] {$\beta_2$} (d-2-3)
        ([xshift=-0.1em] d-2-2.south) edge[-] ([xshift=-0.1em] d-3-2.north)
        ([xshift=0.1em] d-2-2.south) edge[-] ([xshift=0.1em] d-3-2.north)
        (d-2-3) edge node[above] {$\beta_3$} (d-2-4)
                     edge[densely dashed] node[right] {$\theta_3$} (d-3-3)
        (d-2-4) edge node[above] {$\beta_4$} (d-2-5)
                     edge[densely dashed] node[right] {$\theta_4$} (d-3-4)
        (d-2-5) edge node[above] {$\beta_{n - 2}$} (d-2-6)
        (d-2-6) edge node[above] {$\beta_{n - 1}$} (d-2-7)
                     edge[densely dashed] node[right] {$\theta_{n - 1}$} (d-3-6)
        (d-2-7) edge node[above] {$\beta_n$} (d-2-8)
                     edge[densely dashed] node[right] {$\theta_n$} (d-3-7)
        (d-2-8) edge node[right] {$\Sigma\alpha_1$} (d-3-8)
        (d-3-1) edge node[above] {$\varphi_2$} (d-3-2)
        (d-3-2) edge node[above] {$\gamma_2$} (d-3-3)
        (d-3-3) edge node[above] {$\gamma_3$} (d-3-4)
        (d-3-4) edge node[above] {$\gamma_4$} (d-3-5)
        (d-3-5) edge node[above] {$\gamma_{n - 2}$} (d-3-6)
        (d-3-6) edge node[above] {$\gamma_{n - 1}$} (d-3-7)
        (d-3-7) edge node[above] {$\gamma_n$} (d-3-8)
        (d-1-4) edge[densely dashed,out=-102,in=30] node[pos=0.15,left] {$\psi_4$}
          (d-3-3)
        (d-1-7) edge[densely dashed,out=-102,in=30] node[pos=0.15,left] {$\psi_n$}
          (d-3-6);
    \end{tikzpicture}
  \end{center}
  with commuting squares and with rows in $\nang$, the dotted
  morphisms exist such that each square commutes, and the
  $n$-$\Sigma$-sequence
  \begin{align*}
    A_3 \xrightarrow{\left[\begin{smallmatrix}
          \alpha_3\\
          \varphi_3
        \end{smallmatrix}\right]}
    & A_4 \oplus B_3 \xrightarrow{\left[\begin{smallmatrix}
          -\alpha_4 & 0\\
          \hfill \varphi_4 & -\beta_3\\
          \hfill \psi_4 & \hfill \theta_3
        \end{smallmatrix}\right]} A_5 \oplus B_4 \oplus C_3
     \xrightarrow{\left[\begin{smallmatrix}
         -\alpha_5 & 0 & 0\\
         -\varphi_5 & -\beta_4 & 0\\
         \hfill \psi_5 & \hfill \theta_4 & \gamma_3
         \end{smallmatrix}\right]}\\
    & \xrightarrow{\left[\begin{smallmatrix}
         -\alpha_5 & 0 & 0\\
         -\varphi_5 & -\beta_4 & 0\\
         \hfill \psi_5 & \hfill \theta_4 & \gamma_3
         \end{smallmatrix}\right]} A_6 \oplus B_5 \oplus C_4
      \xrightarrow{\left[\begin{smallmatrix}
         -\alpha_6 & 0 & 0\\
         \hfill \varphi_6 & -\beta_5 & 0\\
         \hfill \psi_6 & \hfill \theta_5 & \gamma_4
         \end{smallmatrix}\right]}  \cdots\\
    & \cdots \xrightarrow{\left[\begin{smallmatrix}
         -\alpha_{n - 1} & 0 & 0\\
         (-1)^{n - 1} \varphi_{n - 1} & -\beta_{n - 2} & 0\\
         \psi_{n - 1} & \hfill \theta_{n - 2} & \gamma_{n - 3}
         \end{smallmatrix}\right]} A_n \oplus B_{n - 1} \oplus C_{n - 2}
    \xrightarrow{\left[\begin{smallmatrix}
         (-1)^n \varphi_n & -\beta_{n - 1} & 0\\
         \psi_n & \hfill \theta_{n - 1} & \gamma_{n - 2}\\
       \end{smallmatrix}\right]}\\
    & \xrightarrow{\left[\begin{smallmatrix}
         (-1)^n \varphi_n & -\beta_{n - 1} & 0\\
         \psi_n & \hfill \theta_{n - 1} & \gamma_{n - 2}\\
       \end{smallmatrix}\right]} B_n \oplus C_{n - 1}
    \xrightarrow{\left[\begin{smallmatrix}
          \theta_n & \gamma_{n - 1}
        \end{smallmatrix}\right]} C_n \xrightarrow{\Sigma\alpha_2
                     \circ \gamma_n} \Sigma A_3
  \end{align*}
  belongs to $\nang$.
\end{itemize}

In this case, the collection $\nang$ is an \emph{$n$-angulation} of
the category $\C$ (relative to the automorphism $\Sigma$), and the
$n$-$\Sigma$-sequences in $\nang$ are \emph{$n$-angles}. If
$( \C, \Sigma, \nang )$ satisfies (N1), (N2) and (N3), then $\nang$ is
a \emph{pre-$n$-angulation}, and the triple $( \C, \Sigma, \nang )$ is
a \emph{pre-$n$-angulated category}.

\begin{remark}
  \label{rem:Jasso}
  (1) In \cite{GKO}, the fourth defining axiom is the ``mapping cone
  axiom'', which says that in the situation of (N3), the morphisms
  $\varphi_3, \dots, \varphi_n$ can be chosen in such a way that the
  mapping cone of $(\varphi_1, \varphi_2, \dots, \varphi_n)$ belongs
  to $\nang$. When $n = 3$, that is, in the triangulated case, it was
  shown by Neeman in \cite{Neeman1, Neeman2} that this mapping cone
  axiom is equivalent to the octahedral axiom, when the other three
  axioms hold.

  When $n = 3$, the diagram in our axiom (N4) is precisely the
  octahedral diagram, and the axiom is the octahedral axiom. Thus (N4)
  can be thought of as a higher analogue of the octahedral axiom. It
  was shown in \cite{BerghThaule} that when (N1), (N2) and (N3) hold,
  then (N4) and the mapping cone axioms are equivalent.\footnote{We
    thank Gustavo Jasso for showing us the compact diagram we have
    used in axiom (N4). This axiom is not strictly the same as axiom
    (N4*) in \cite{BerghThaule}. However, it follows from the proofs
    in \cite[Section 4]{BerghThaule} that the two are equivalent.}
  Thus our definition of an $n$-angulated category is equivalent to
  that of Geiss, Keller and Oppermann in \cite{GKO}.

  (2) Let $B$ be an object in $\C$. Any $n$-$\Sigma$-sequence
  $A_{\bullet}$ induces an infinite sequence
  \begin{equation*}
    \cdots \to (B, \Sigma^{-1}A_n) \xrightarrow{(\Sigma^{-1}
      \alpha_n)_*} (B,A_1) \xrightarrow{(\alpha_1)_*} (B,A_2) \to \cdots
    \to (B,A_n) \xrightarrow{(\alpha_n)_*} (B, \Sigma A_1) \to \cdots
  \end{equation*}
  of abelian groups and homomorphisms, where $(B,A)$ denotes
  $\Hom_{\C}(B,A)$. If this sequence is exact for all $B$, then
  $A_{\bullet}$ is called an \emph{exact} $n$-$\Sigma$-sequence. By
  \cite[Proposition 2.5]{GKO}, every $n$-angle in a pre-$n$-angulated
  category is exact.

  Now modify axiom (N1) into a new axiom (N1*), whose parts (b) and
  (c) are unchanged, but with the following part (a): if
  $A_\bullet \to B_\bullet$ is a weak isomorphism of exact
  $n$-$\Sigma$-sequences with $A_\bullet \in \nang$, then $B_\bullet$
  belongs to $\nang$. Moreover, let (N2*) be the following weak
  version of axiom (N2): the left rotation of every
  $n$-$\Sigma$-sequenc in $\nang$ also belongs to $\nang$. Then by
  \cite[Theorem 3.4]{BerghThaule}, the following are equivalent:
  \begin{itemize}
  \item[(i)] $( \C, \Sigma, \nang )$ satisfies (N1), (N2), (N3),
  \item[(ii)] $( \C, \Sigma, \nang )$ satisfies (N1*), (N2), (N3),
  \item[(iii)] $( \C, \Sigma, \nang )$ satisfies (N1*), (N2*), (N3).
  \end{itemize}
  It is not known whether only axiom (N2) can be replaced by (N2*),
  without replacing (N1) by (N1*).
\end{remark}

\section{Axiom (N3) is redundant}
\label{sec:(N3)}

In this section we show that the morphism axiom, i.e.\ axiom (N3),
follows from the other axioms. In light of Remark \ref{rem:Jasso}(2),
it would perhaps seem natural to ask which axioms one should use to
prove this. In other words, should one use axioms (N1), (N2), (N4),
axioms (N1*), (N2), (N4), or axioms (N1*), (N2*), (N4)? The answer is
that it does not matter. In fact, axiom (N3) is a consequence of
(N1)(c) and (N4).

\begin{theorem}
  \label{thm:main}
  Axiom \emph{(N3)} follows from \emph{(N1)(c)} and \emph{(N4)}.
\end{theorem}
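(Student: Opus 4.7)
The strategy is to construct the missing components of the desired morphism $A_\bullet \to B_\bullet$ by routing through a bridge $n$-angle built on the common composite $\beta_1 \varphi_1 = \varphi_2 \alpha_1$. Two applications of (N4) then produce morphisms of $n$-angles on either side of the bridge, and composing them yields the required extension. This is the same idea May uses for triangulated categories in \cite{May}, adapted to the higher setting by exploiting that (N4) already demands a morphism of $n$-angles between its first two rows and another between its second and third rows.

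First I would invoke (N1)(c) three times to obtain the following auxiliary $n$-angles: an $n$-angle $M_\bullet$ whose first morphism is $\varphi_1 \colon A_1 \to B_1$; an $n$-angle $D_\bullet$ whose first morphism is the composite $\beta_1 \varphi_1 = \varphi_2 \alpha_1 \colon A_1 \to B_2$; and an $n$-angle $C_\bullet$ whose first morphism is $\varphi_2 \colon A_2 \to B_2$. These are the candidates to play the roles of the three rows in two separate instances of (N4).

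Now apply (N4) to the factorization $A_1 \xrightarrow{\alpha_1} A_2 \xrightarrow{\varphi_2} B_2$, with $A_\bullet$, $D_\bullet$, $C_\bullet$ placed in the first, second, and third rows respectively. The axiom produces in particular a morphism of $n$-angles $A_\bullet \to D_\bullet$ whose first two components are $\mathrm{id}_{A_1}$ and $\varphi_2$. Next apply (N4) to the factorization $A_1 \xrightarrow{\varphi_1} B_1 \xrightarrow{\beta_1} B_2$, with $M_\bullet$, $D_\bullet$, $B_\bullet$ in the first, second, and third rows. Among its output is a morphism of $n$-angles $D_\bullet \to B_\bullet$ whose first two components are $\varphi_1$ and $\mathrm{id}_{B_2}$.

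Composing these two morphisms produces a morphism of $n$-angles $A_\bullet \to B_\bullet$ whose first two components are $\varphi_1 \circ \mathrm{id}_{A_1} = \varphi_1$ and $\mathrm{id}_{B_2} \circ \varphi_2 = \varphi_2$, which is precisely the extension demanded by (N3). There is no serious obstacle: the only content is verifying that in each application the data genuinely has the form required by (N4), and then observing that the two morphisms of $n$-angles produced by the axiom compose to recover exactly the prescribed pair $(\varphi_1, \varphi_2)$ in positions one and two. The additional $n$-angle that (N4) constructs plays no role and can be discarded.
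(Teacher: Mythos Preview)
Your proposal is correct and follows essentially the same approach as the paper: build a bridge $n$-angle on the composite $\varphi_2\alpha_1 = \beta_1\varphi_1$, apply (N4) once for each factorization to obtain morphisms of $n$-angles into and out of the bridge, and compose them. The paper's proof differs only in notation and in spelling out the resulting commutative diagrams explicitly; it likewise remarks that the diagonal morphisms and the extra $n$-angle produced by (N4) are irrelevant.
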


\begin{proof}
  Assume that we are in the situation of axiom (N3), i.e.\ that we are given the
  solid part of the commutative diagram
  \begin{center}
    \begin{tikzpicture}
      \diagram{d}{2.5em}{2.5em}{
        A_1 & A_2 & A_3 & \cdots & A_n & \Sigma A_1\\
        B_1 & B_2 & B_3 & \cdots & B_n & \Sigma B_1\\
      };
    
      \path[->,midway,font=\scriptsize]
        (d-1-1) edge node[above] {$\alpha_1$} (d-1-2)
                     edge node[right] {$\varphi_1$} (d-2-1)
        (d-1-2) edge node[above] {$\alpha_2$} (d-1-3)
                     edge node[right] {$\varphi_2$} (d-2-2)
        (d-1-3) edge node[above] {$\alpha_3$} (d-1-4)
                     edge[densely dashed] node[right] {$\varphi_3$} (d-2-3)
        (d-1-4) edge node[above] {$\alpha_{n - 1}$} (d-1-5)
        (d-1-5) edge node[above] {$\alpha_n$} (d-1-6)
                     edge[densely dashed] node[right] {$\varphi_n$} (d-2-5)
        (d-1-6) edge node[right] {$\Sigma\varphi_1$} (d-2-6)
        (d-2-1) edge node[above] {$\beta_1$} (d-2-2)
        (d-2-2) edge node[above] {$\beta_2$} (d-2-3)
        (d-2-3) edge node[above] {$\beta_3$} (d-2-4)
        (d-2-4) edge node[above] {$\beta_{n - 1}$} (d-2-5)
        (d-2-5) edge node[above] {$\beta_n$} (d-2-6);
    \end{tikzpicture}
  \end{center}
  with rows in $\nang$. We must prove that dotted morphisms
  $\varphi_3, \dots, \varphi_n$ exist and give a morphism
  $(\varphi_1, \varphi_2, \varphi_3, \dots, \varphi_n )$ of
  $n$-$\Sigma$-sequences.
  
  By assumption, the equality
  $\varphi_2 \circ \alpha_1 = \beta_1 \circ \varphi_1$ holds; we
  define $\gamma_1$ to be this morphism, i.e.
  $\gamma_1 = \varphi_2 \circ \alpha_1 = \beta_1 \circ \varphi_1$. Now
  apply axiom (N1)(c) to the three morphisms
  \begin{equation*}
    \gamma_1 \colon A_1 \to B_2, \hspace{5mm} \varphi_2 \colon A_2 \to
    B_2, \hspace{5mm} \varphi_1 \colon A_1 \to B_1
  \end{equation*}
  in $\C$, and obtain three $n$-$\Sigma$-sequences
  \begin{equation*}
    A_1 \xrightarrow{\gamma_1} B_2 \xrightarrow{\gamma_2}  C_3
    \xrightarrow{\gamma_3} \cdots \xrightarrow{\gamma_{n - 1}} C_n
    \xrightarrow{\gamma_n} \Sigma A_1,
  \end{equation*}
  \begin{equation*}
    A_2 \xrightarrow{\varphi_2} B_2 \xrightarrow{\delta_2}  D_3
    \xrightarrow{\delta_3} \cdots \xrightarrow{\delta_{n - 1}} D_n
    \xrightarrow{\delta_n} \Sigma A_2
  \end{equation*}
  and
  \begin{equation*}
    A_1 \xrightarrow{\varphi_1} B_1 \xrightarrow{\epsilon_2}  E_3
    \xrightarrow{\epsilon_3} \cdots \xrightarrow{\epsilon_{n - 1}} E_n
    \xrightarrow{\epsilon_n} \Sigma A_1
  \end{equation*}
  in $\nang$. Next, consider the two diagrams
  \begin{center}
    \begin{tikzpicture}
      \diagram{d}{2.5em}{2.5em}{
        A_1 & A_2 & A_3 & A_4 & \cdots & A_{n - 1} & A_n & \Sigma
        A_1\\
        A_1 & B_2 & C_3 & C_4 & \cdots & C_{n - 1} & C_n & \Sigma
        A_1\\
        A_2 & B_2 & D_3 & D_4 & \cdots & D_{n - 1} & D_n & \Sigma
        A_2\\
      };

      \path[->,midway,font=\scriptsize]
        (d-1-1) edge node[above] {$\alpha_1$} (d-1-2)
        ([xshift=-0.1em] d-1-1.south) edge[-] ([xshift=-0.1em] d-2-1.north)
        ([xshift=0.1em] d-1-1.south) edge[-] ([xshift=0.1em] d-2-1.north)
        (d-1-2) edge node[above] {$\alpha_2$} (d-1-3)
                     edge node[right] {$\varphi_2$} (d-2-2)
        (d-1-3) edge node[above] {$\alpha_3$} (d-1-4)
                     edge[densely dashed] node[right] {$\rho_3$} (d-2-3)
        (d-1-4) edge node[above] {$\alpha_4$} (d-1-5)
                     edge[densely dashed] node[right] {$\rho_4$} (d-2-4)
        (d-1-5) edge node[above] {$\alpha_{n - 2}$} (d-1-6)
        (d-1-6) edge node[above] {$\alpha_{n - 1}$} (d-1-7)
                     edge[densely dashed] node[right] {$\rho_{n - 1}$} (d-2-6)
        (d-1-7) edge node[above] {$\alpha_n$} (d-1-8)
                     edge[densely dashed] node[right] {$\rho_n$} (d-2-7)
        ([xshift=-0.1em] d-1-8.south) edge[-] ([xshift=-0.1em] d-2-8.north)
        ([xshift=0.1em] d-1-8.south) edge[-] ([xshift=0.1em] d-2-8.north)
        (d-2-1) edge node[above] {$\gamma_1$} (d-2-2)
                     edge node[right] {$\alpha_1$} (d-3-1)
        (d-2-2) edge node[above] {$\gamma_2$} (d-2-3)
        ([xshift=-0.1em] d-2-2.south) edge[-] ([xshift=-0.1em] d-3-2.north)
        ([xshift=0.1em] d-2-2.south) edge[-] ([xshift=0.1em] d-3-2.north)
        (d-2-3) edge node[above] {$\gamma_3$} (d-2-4)
                     edge[densely dashed] node[right] {$\theta_3$} (d-3-3)
        (d-2-4) edge node[above] {$\gamma_4$} (d-2-5)
                     edge[densely dashed] node[right] {$\theta_4$} (d-3-4)
        (d-2-5) edge node[above] {$\gamma_{n - 2}$} (d-2-6)
        (d-2-6) edge node[above] {$\gamma_{n - 1}$} (d-2-7)
                     edge[densely dashed] node[right] {$\theta_{n - 1}$} (d-3-6)
        (d-2-7) edge node[above] {$\gamma_n$} (d-2-8)
                     edge[densely dashed] node[right] {$\theta_n$} (d-3-7)
        (d-2-8) edge node[right] {$\Sigma\alpha_1$} (d-3-8)
        (d-3-1) edge node[above] {$\varphi_2$} (d-3-2)
        (d-3-2) edge node[above] {$\delta_2$} (d-3-3)
        (d-3-3) edge node[above] {$\delta_3$} (d-3-4)
        (d-3-4) edge node[above] {$\delta_4$} (d-3-5)
        (d-3-5) edge node[above] {$\delta_{n - 2}$} (d-3-6)
        (d-3-6) edge node[above] {$\delta_{n - 1}$} (d-3-7)
        (d-3-7) edge node[above] {$\delta_n$} (d-3-8);
    \end{tikzpicture}
  \end{center}
  and
  \begin{center}
    \begin{tikzpicture}
      \diagram{d}{2.5em}{2.5em}{
        A_1 & B_1 & E_3 & E_4 & \cdots & E_{n - 1} & E_n & \Sigma A_1\\
        A_1 & B_2 & C_3 & C_4 & \cdots & C_{n - 1} & C_n & \Sigma
        A_1\\
        B_1 & B_2 & B_3 & B_4 & \cdots & B_{n - 1} & B_n & \Sigma
        B_1\\
      };

      \path[->,midway,font=\scriptsize]
        (d-1-1) edge node[above] {$\varphi_1$} (d-1-2)
        ([xshift=-0.1em] d-1-1.south) edge[-] ([xshift=-0.1em] d-2-1.north)
        ([xshift=0.1em] d-1-1.south) edge[-] ([xshift=0.1em] d-2-1.north)
        (d-1-2) edge node[above] {$\epsilon_2$} (d-1-3)
                     edge node[right] {$\beta_1$} (d-2-2)
        (d-1-3) edge node[above] {$\epsilon_3$} (d-1-4)
                     edge[densely dashed] node[right] {$\psi_3$} (d-2-3)
        (d-1-4) edge node[above] {$\epsilon_4$} (d-1-5)
                     edge[densely dashed] node[right] {$\psi_4$} (d-2-4)
        (d-1-5) edge node[above] {$\epsilon_{n - 2}$} (d-1-6)
        (d-1-6) edge node[above] {$\epsilon_{n - 1}$} (d-1-7)
                     edge[densely dashed] node[right] {$\psi_{n - 1}$} (d-2-6)
        (d-1-7) edge node[above] {$\epsilon_n$} (d-1-8)
                     edge[densely dashed] node[right] {$\psi_n$} (d-2-7)
        ([xshift=-0.1em] d-1-8.south) edge[-] ([xshift=-0.1em] d-2-8.north)
        ([xshift=0.1em] d-1-8.south) edge[-] ([xshift=0.1em] d-2-8.north)
        (d-2-1) edge node[above] {$\gamma_1$} (d-2-2)
                     edge node[right] {$\varphi_1$} (d-3-1)
        (d-2-2) edge node[above] {$\gamma_2$} (d-2-3)
        ([xshift=-0.1em] d-2-2.south) edge[-] ([xshift=-0.1em] d-3-2.north)
        ([xshift=0.1em] d-2-2.south) edge[-] ([xshift=0.1em] d-3-2.north)
        (d-2-3) edge node[above] {$\gamma_3$} (d-2-4)
                     edge[densely dashed] node[right] {$\eta_3$} (d-3-3)
        (d-2-4) edge node[above] {$\gamma_4$} (d-2-5)
                     edge[densely dashed] node[right] {$\eta_4$} (d-3-4)
        (d-2-5) edge node[above] {$\gamma_{n - 2}$} (d-2-6)
        (d-2-6) edge node[above] {$\gamma_{n - 1}$} (d-2-7)
                     edge[densely dashed] node[right] {$\eta_{n - 1}$} (d-3-6)
        (d-2-7) edge node[above] {$\gamma_n$} (d-2-8)
                     edge[densely dashed] node[right] {$\eta_n$} (d-3-7)
        (d-2-8) edge node[right] {$\Sigma\varphi_1$} (d-3-8)
        (d-3-1) edge node[above] {$\beta_1$} (d-3-2)
        (d-3-2) edge node[above] {$\beta_2$} (d-3-3)
        (d-3-3) edge node[above] {$\beta_3$} (d-3-4)
        (d-3-4) edge node[above] {$\beta_4$} (d-3-5)
        (d-3-5) edge node[above] {$\beta_{n - 2}$} (d-3-6)
        (d-3-6) edge node[above] {$\beta_{n - 1}$} (d-3-7)
        (d-3-7) edge node[above] {$\beta_n$} (d-3-8);
    \end{tikzpicture}
  \end{center}
  Note that all the rows in these two diagrams are
  $n$-$\Sigma$-sequences in $\nang$. Moreover, it follows from the
  definition of the morphism $\gamma_1$ that the solid parts of the
  diagrams commute. We may therefore apply axiom (N4); there exist
  dotted morphisms such that each square commutes. Axiom (N4) also
  gives other morphisms, the diagonal ones, but these are not needed
  here.
  
  The two diagrams share the same middle row. We may therefore combine
  the upper part of the first diagram and the lower part of the
  second, and obtain the commutative diagram
  \begin{center}
    \begin{tikzpicture}
      \diagram{d}{2.5em}{2.5em}{
        A_1 & A_2 & A_3 & A_4 & \cdots & A_{n - 1} & A_n & \Sigma A_1\\
        A_1 & B_2 & C_3 & C_4 & \cdots & C_{n - 1} & C_n & \Sigma
        A_1\\
                B_1 & B_2 & B_3 & B_4 & \cdots & B_{n - 1} & B_n & \Sigma
        B_1\\
      };

      \path[->,midway,font=\scriptsize]
        (d-1-1) edge node[above] {$\alpha_1$} (d-1-2)
        ([xshift=-0.1em] d-1-1.south) edge[-] ([xshift=-0.1em] d-2-1.north)
        ([xshift=0.1em] d-1-1.south) edge[-] ([xshift=0.1em] d-2-1.north)
        (d-1-2) edge node[above] {$\alpha_2$} (d-1-3)
                     edge node[right] {$\varphi_2$} (d-2-2)
        (d-1-3) edge node[above] {$\alpha_3$} (d-1-4)
                     edge node[right] {$\rho_3$} (d-2-3)
        (d-1-4) edge node[above] {$\alpha_4$} (d-1-5)
                     edge node[right] {$\rho_4$} (d-2-4)
        (d-1-5) edge node[above] {$\alpha_{n - 2}$} (d-1-6)
        (d-1-6) edge node[above] {$\alpha_{n - 1}$} (d-1-7)
                     edge node[right] {$\rho_{n - 1}$} (d-2-6)
        (d-1-7) edge node[above] {$\alpha_n$} (d-1-8)
                     edge node[right] {$\rho_n$} (d-2-7)
        ([xshift=-0.1em] d-1-8.south) edge[-] ([xshift=-0.1em] d-2-8.north)
        ([xshift=0.1em] d-1-8.south) edge[-] ([xshift=0.1em] d-2-8.north)
        (d-2-1) edge node[above] {$\gamma_1$} (d-2-2)
                     edge node[right] {$\varphi_1$} (d-3-1)
        (d-2-2) edge node[above] {$\gamma_2$} (d-2-3)
        ([xshift=-0.1em] d-2-2.south) edge[-] ([xshift=-0.1em] d-3-2.north)
        ([xshift=0.1em] d-2-2.south) edge[-] ([xshift=0.1em] d-3-2.north)
        (d-2-3) edge node[above] {$\gamma_3$} (d-2-4)
                     edge node[right] {$\eta_3$} (d-3-3)
        (d-2-4) edge node[above] {$\gamma_4$} (d-2-5)
                     edge node[right] {$\eta_4$} (d-3-4)
        (d-2-5) edge node[above] {$\gamma_{n - 2}$} (d-2-6)
        (d-2-6) edge node[above] {$\gamma_{n - 1}$} (d-2-7)
                     edge node[right] {$\eta_{n - 1}$} (d-3-6)
        (d-2-7) edge node[above] {$\gamma_n$} (d-2-8)
                     edge node[right] {$\eta_n$} (d-3-7)
        (d-2-8) edge node[right] {$\Sigma\varphi_1$} (d-3-8)
        (d-3-1) edge node[above] {$\beta_1$} (d-3-2)
        (d-3-2) edge node[above] {$\beta_2$} (d-3-3)
        (d-3-3) edge node[above] {$\beta_3$} (d-3-4)
        (d-3-4) edge node[above] {$\beta_4$} (d-3-5)
        (d-3-5) edge node[above] {$\beta_{n - 2}$} (d-3-6)
        (d-3-6) edge node[above] {$\beta_{n - 1}$} (d-3-7)
        (d-3-7) edge node[above] {$\beta_n$} (d-3-8);
    \end{tikzpicture}
  \end{center}
  Finally, by omitting the middle row, we obtain the commutative
  diagram
  \begin{center}
    \begin{tikzpicture}
      \diagram{d}{2.5em}{2.5em}{
        A_1 & A_2 & A_3 & A_4 & \cdots & A_{n - 1} & A_n & \Sigma
        A_1\\ 
        B_1 & B_2 & B_3 & B_4 & \cdots & B_{n - 1} & B_n & \Sigma
        B_1\\ 
      };

      \path[->,midway,font=\scriptsize]
        (d-1-1) edge node[above] {$\alpha_1$} (d-1-2)
                     edge node[right] {$\varphi_1$} (d-2-1)
        (d-1-2) edge node[above] {$\alpha_2$} (d-1-3)
                     edge node[right] {$\varphi_2$} (d-2-2)
        (d-1-3) edge node[above] {$\alpha_3$} (d-1-4)
                     edge node[right] {$\eta_3 \circ \rho_3$} (d-2-3)
        (d-1-4) edge node[above] {$\alpha_4$} (d-1-5)
                     edge node[right] {$\eta_4 \circ \rho_4$} (d-2-4)
        (d-1-5) edge node[above] {$\alpha_{n - 2}$} (d-1-6)
        (d-1-6) edge node[above] {$\alpha_{n - 1}$} (d-1-7)
                     edge node[right] {$\eta_{n-1} \circ \rho_{n - 1}$} (d-2-6)
        (d-1-7) edge node[above] {$\alpha_n$} (d-1-8)
                     edge node[right] {$\eta_n \circ \rho_n$} (d-2-7)
        (d-1-8) edge node[right] {$\Sigma\varphi_1$} (d-2-8)
        (d-2-1) edge node[above] {$\beta_1$} (d-2-2)
        (d-2-2) edge node[above] {$\beta_2$} (d-2-3)
        (d-2-3) edge node[above] {$\beta_3$} (d-2-4)
        (d-2-4) edge node[above] {$\beta_4$} (d-2-5)
        (d-2-5) edge node[above] {$\beta_{n - 2}$} (d-2-6)
        (d-2-6) edge node[above] {$\beta_{n - 1}$} (d-2-7)
        (d-2-7) edge node[above] {$\beta_n$} (d-2-8);
    \end{tikzpicture}
  \end{center}
  Now for each $3\leq k \leq n$, define a morphism $\varphi_k$ by
  $\varphi_k = \eta_k \circ \rho_k$. Then
  $(\varphi_1, \varphi_2, \varphi_3, \dots, \varphi_n)$ is a morphism
  of $n$-$\Sigma$-sequences, and this completes the proof.
\end{proof}

\begin{remark}
  \label{rem:May}
  (1) As noted in the proof, not all of axiom (N4) is needed. The
  diagonal morphisms provided by the axiom play no role, and,
  consequently, neither does the $n$-$\Sigma$-sequence involving these
  morphisms.
  
  (2) Note that if $n = 3$, that is, in the triangulated case, the
  result recovers that of May in \cite{May}, but with a slightly
  different setup. Indeed, May's result was the inspiration for the
  theorem.
\end{remark}

\bibliographystyle{plain}

\begin{thebibliography}{10}
\bibitem{BerghJassoThaule}P.A.\ Bergh, G.\ Jasso, M.\ Thaule,
  \emph{Higher $n$-angulations from local rings}, J.\ London Math.\
  Soc.\  \textbf{93}  (2016), no.\ 1, 123--142.
\bibitem{BerghThaule}P.A.\ Bergh, M.\ Thaule, \emph{The axioms for
    $n$-angulated categories}, Algebr.\ Geom.\ Topol.\ \textbf{13}
  (2013), no.\ 4, 2405--2428.
\bibitem{GKO}C.\ Geiss, B.\ Keller, S.\ Oppermann,
  \emph{$n$-angulated categories}, J.\ Reine Angew.\ Math.\
  \textbf{675} (2013), 101--120.
\bibitem{Jasso}G.\ Jasso, \emph{$n$-abelian and $n$-exact categories},
  to appear in Math.\ Z.
\bibitem{May}J.P.\ May, \emph{The additivity of traces in triangulated
    categories}, Adv.\ Math.\ \textbf{163} (2001), no.\ 1, 34--73.
\bibitem{Neeman1}A.\ Neeman, \emph{Some new axioms for triangulated
    categories}, J.\ Algebra \textbf{139} (1991), no.\ 1, 221--255.
\bibitem{Neeman2}A.\ Neeman, \emph{Triangulated categories}, Annals of
  Mathematics Studies, \textbf{148}, Princeton University Press,
  Princeton, NJ, 2001.
\end{thebibliography}

%
%
\end{document}